\documentclass[reqno0,fleqn,12pt]{amsart}
\pagestyle{myheadings}
\pdfoutput=1
\usepackage{dsfont}
\usepackage{bbm}
\usepackage[nodate]{datetime}
\usepackage[hmargin=30mm,top=25mm,bottom=25mm,a4paper]{geometry}
\usepackage{color}
\usepackage{subfig}
\usepackage{fancyhdr}
\usepackage{amsmath,amssymb,amsfonts,amsthm}
\usepackage{amstext}
\usepackage{amsmath}
\usepackage{amssymb}
\usepackage{enumerate}
\usepackage{amsbsy}
\usepackage{amsopn}
\usepackage{bbm,amsthm}
\usepackage{amscd}
\usepackage{amsxtra}
\usepackage{todonotes}

\newtheorem{theorem}{Theorem}[section]
\newtheorem*{theorem*}{Theorem}
\newtheorem{lemma}{Lemma}[section]

\newtheorem{corollary}{Corollary}[section]

\theoremstyle{remark}

\newcommand{\CC}{\mathds{C}}

\newcommand{\RR}{\mathds{R}}
\newcommand{\NN}{\mathds{N}}
\newcommand{\HH}{\mathds{H}}

\newcommand{\EE}{\mathbb{E}}

\setlength{\parindent}{20pt}
\setlength{\parskip}{.5em}
\marginparwidth 20mm

\begin{document}
\title{A note on prime number races and zero free regions for $L$ functions}
\author{Marco Aymone}
\begin{abstract}
Let $\chi$ be a real and non-principal Dirichlet character, $L(s,\chi)$ its Dirichlet $L$-function and let $p$ be a generic prime number. We prove the following result: If for some $0\leq \sigma<1$ the partial sums $\sum_{p\leq x}\chi(p)p^{-\sigma}$ change sign only for a finite number of integers $x$, then there exists $\epsilon>0$ such that $L(s,\chi)$ has no zeros in the half plane $Re(s)>1-\epsilon$. 
\end{abstract}

\maketitle

\section{Introduction.}
Let $\chi$ be a non-principal Dirichlet character and $L(s,\chi)$ its Dirichlet $L$-function.
Many central problems in analytic number theory such as questions about the distribution of primes in arithmetic progressions can be phrased in terms of zero-free regions for $L(s,\chi)$. The typical zero free-region for $L(s,\chi)$ known up to date is: If $q$ is the modulus of $\chi$, then there exists a constant $c>0$ such $L(\sigma+it,\chi)\neq 0$ for all $\sigma$ and $t$ such that
\begin{equation*}
\sigma>1-\frac{c}{\log q(2+|t|)},
\end{equation*}  
with at most one possible exception -- A real zero $\beta<1$ -- in the case that $\chi$ is real (see \cite{montgomerylivro} pg. 360).

Let $p$ be a generic prime number and $\mathcal{P}$ be the set of primes. Let $\chi_4$ be the real and non-principal Dirichlet character mod $4$, \textit{i.e.}, $\chi_4(n)=1$ if $n\equiv 1\mod 4$, $\chi_4(n)=-1$ if $n\equiv 3\mod 4$ and $\chi_4(n)=0$ if $n$ is even. Then the sum $\sum_{p\leq x}\chi_4(p)$ is the prime number race mod $4$: the number of primes up to $x$ of the form $4n+1$ minus the number of primes up to $x$ of the form $4n+3$. 

In 1853, in a letter to Fuss,  it has been observed by Tch\'ebyshev that seems to be more primes of the form $4n+3$ than primes of the form $4n+1$. In other words, it seems that $\sum_{p\leq x}\chi_4(p)$ is negative for most values of $x$. This observation led to many investigations on prime number races for a generic modulus $q$. For an historical background on prime number races we refer reader to the expository paper of Granville and Martin \cite{granvilleraces}, and for recent results in this topic we refer to the paper of Harper and Lamzouri \cite{harperraces} and the references therein.  

In the prime number race mod $4$, the partial sums $\sum_{p\leq x}\chi_4(p)$ change sign for an infinite number of integers $x$. However, for $0<\sigma<1$, it is possible that the \textit{weighted} prime number race $\sum_{p\leq x}\frac{\chi_4(p)}{p^\sigma}$ change sign only for a finite number of integers $x$. Indeed, if we assume the Riemann Hypothesis for $L(s,\chi_4)$, we have that for some constant $c>0$ (see the concluding remarks below), for fixed $\sigma\to1/2^+$ 
\begin{equation}\label{equacao auxiliar 0}
\sum_{p\leq x}\frac{\chi_4(p)}{p^\sigma}\leq-\frac{1}{2}\log\left( \frac{1}{2\sigma-1} \right)+c,
\end{equation} 
for all $x$ greater than an $x_0=x_0(\sigma)$, and since the right side of \eqref{equacao auxiliar 0} becomes negative and blows as $\sigma\to1/2^+$, we might say that this is somehow in agreement with the intuiton behind the Tch\'ebyshev bias.

In particular, if \eqref{equacao auxiliar 0} holds for all $x\geq x_0$, the partial sums $\sum_{p\leq x}\frac{\chi_4(p)}{p^\sigma}$ change sign only for a finite number of integers $x$. If this last assertion is true, our first result states:

\begin{theorem}\label{teorema 1} Let $\chi$ be a real and non-principal Dirichlet character. If for some $0\leq\sigma<1$ the partial sums $\sum_{p\leq x}\chi(p)p^{-\sigma}$ change sign only for a finite number of integers $x\geq 1$, then there exists $\epsilon>0$ such that $L(s,\chi)\neq 0$ for all $s$ in the half plane
$Re(s)>1-\epsilon$. 
\end{theorem}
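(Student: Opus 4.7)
The plan is to reduce the claim to showing that $P(s) := \sum_p \chi(p) p^{-s}$ extends holomorphically to some half-plane $Re(s) > 1 - \epsilon$. For $Re(s) > 1$ we have the identity $\log L(s, \chi) = P(s) + R(s)$, where $R(s) := \sum_{k \geq 2} \sum_p \chi(p)^k/(k p^{ks})$ converges absolutely and is thus holomorphic in $Re(s) > 1/2$. Consequently, if $P$ extends holomorphically to $Re(s) > 1 - \epsilon$, then by analytic continuation the identity $L(s, \chi) = \exp(P(s) + R(s))$ persists throughout $Re(s) > 1 - \min(\epsilon, 1/2)$, and $L(\cdot, \chi)$ cannot vanish there.

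To produce this extension, set $T(x) := \sum_{p \leq x} \chi(p) p^{-\sigma}$ and suppose $T(x) \geq 0$ for $x \geq x_0$ (the case $T(x) \leq 0$ is symmetric). Abel summation gives, for $Re(s) > 1$,
$$P(s) = (s - \sigma) \int_1^\infty T(x) x^{\sigma - s - 1} \, dx = (s - \sigma)\bigl[E(s) + I(s)\bigr],$$
where $E(s) := \int_1^{x_0} T(x) x^{\sigma - s - 1} \, dx$ is entire and $I(s) := \int_{x_0}^{\infty} T(x) x^{\sigma - s - 1} \, dx$ has a non-negative integrand. The trivial bound $T(x) \ll x^{1 - \sigma}/\log x$ ensures that $I$ converges for $Re(s) > 1$, so its abscissa of convergence $\sigma_c$ satisfies $\sigma_c \leq 1$.

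The crucial step is Landau's theorem for Mellin integrals with non-negative integrand: $\sigma_c$ is a real singular point of the analytically continued $I$. But since $L(1, \chi) \neq 0$, the function $P = \log L(\cdot, \chi) - R$ is holomorphic in a neighborhood of $s = 1$, and therefore $I = P/(s - \sigma) - E$ is also holomorphic at $s = 1$ (using $\sigma < 1$). This forces $\sigma_c < 1$, so $I$, and hence $P$, extends holomorphically to the half-plane $Re(s) > \sigma_c$, completing the argument with $\epsilon := \min(1 - \sigma_c, 1/2) > 0$. The main obstacle is justifying Landau's theorem in the integral setting: the sign hypothesis on $T(x)$ is precisely what forces the singularity of $I$ to lie on the real axis, which — combined with the holomorphy at $s = 1$ coming from $L(1,\chi) \neq 0$ — places it strictly to the left of $1$.
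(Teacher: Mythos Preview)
Your proof is correct and follows essentially the same route as the paper: write $\log L(s,\chi)=P(s)+R(s)$ with $R$ holomorphic in $Re(s)>1/2$, express $P(s)$ via Abel summation as $(s-\sigma)$ times a Mellin integral of the eventually one-signed function $T(x)$, use $L(1,\chi)\neq 0$ to see that this integral is analytic at $s=1$, and then invoke Landau's theorem to push the abscissa of convergence strictly below $1$. The only cosmetic difference is that the paper spells out the branch-of-logarithm step more carefully (building $\log^* L$ on the simply connected region $\HH_1\cup B$), whereas you absorb this into the phrase ``$P=\log L-R$ is holomorphic in a neighborhood of $s=1$''; this is a routine point and not a gap.
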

Let $\mathcal{P}$ be the set of prime numbers.
\begin{corollary} Under the hypothesis of Theorem \ref{teorema 1}, we have that for some $\epsilon>0$, $\sum_{p\in\mathcal{P}}\chi(p)p^{-(1-\epsilon)}$ converges, and hence, the Euler product formula
\begin{equation*}
L(s,\chi)=\prod_{p\in\mathcal{P}}\bigg{(}1-\frac{\chi(p)}{p^s}\bigg{)}^{-1}
\end{equation*}
holds for all $Re(s)>1-\epsilon$.
\end{corollary}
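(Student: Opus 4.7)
By Theorem~\ref{teorema 1} there exists $\epsilon_0 > 0$ such that $L(s,\chi)\neq 0$ in the half-plane $Re(s) > 1 - \epsilon_0$. The plan is first to upgrade this qualitative zero-free region into a power-saving estimate for $\sum_{p\le x}\chi(p)$, and then to deduce both conclusions of the corollary by partial summation and a standard manipulation of the logarithm of the Euler product.

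In the zero-free half-plane $\log L(s,\chi)$ admits an analytic branch, and since $L(s,\chi)$ is entire of order $1$ the Borel--Carath\'eodory theorem yields a bound of the shape $L'(s,\chi)/L(s,\chi) = O(\log^2(|t|+2))$ uniform in every strip $Re(s)\ge 1-\epsilon_0+\delta$. Inserting this bound into Perron's formula for $-L'/L$ and shifting the contour into the zero-free region produces $\psi(x,\chi) = \sum_{n\le x}\Lambda(n)\chi(n) = O(x^{1-\epsilon_1})$ for some $\epsilon_1 \in (0,\epsilon_0)$. Subtracting the $O(\sqrt x)$ contribution of prime powers with exponent $\ge 2$ and one partial summation give $\sum_{p\le x}\chi(p) = O(x^{1-\epsilon_1}/\log x)$; a second Abel summation then yields, for $\epsilon \in (0,\epsilon_1)$,
\begin{equation*}
\sum_{p\le N}\chi(p) p^{-(1-\epsilon)} = \Big(\sum_{p\le N}\chi(p)\Big) N^{-(1-\epsilon)} + (1-\epsilon)\int_1^N \Big(\sum_{p\le t}\chi(p)\Big) t^{-(2-\epsilon)}\,dt,
\end{equation*}
where the boundary term is $O(N^{\epsilon-\epsilon_1}/\log N)\to 0$ and the integral is absolutely convergent, so the series converges.

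With this convergence at $s = 1-\epsilon$ in hand, general Dirichlet series theory propagates convergence of $\sum_p \chi(p) p^{-s}$ to the whole open half-plane $Re(s) > 1 - \epsilon$, while $\sum_p \sum_{k \ge 2}\chi(p)^k/(kp^{ks})$ is absolutely convergent there (after choosing $\epsilon < 1/2$). Summing the two, $\sum_p \log(1-\chi(p) p^{-s})^{-1}$ converges and defines an analytic function that agrees with $\log L(s,\chi)$ on $Re(s) > 1$; by analytic continuation in the simply connected zero-free region $Re(s) > 1-\epsilon$ the identity propagates, and exponentiating gives $L(s,\chi) = \prod_p (1-\chi(p)/p^s)^{-1}$. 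The principal obstacle is the first part of the argument, namely the derivation of $\psi(x,\chi) = O(x^{1-\epsilon_1})$ from the qualitative zero-free region: this is classical but technically involved, requiring the $L'/L$-growth bounds and a careful contour shift.
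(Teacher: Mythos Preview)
Your proposal is correct, but it takes a heavier route than the paper's. You pass through an explicit power-saving bound on $\psi(x,\chi)$: Borel--Carath\'eodory gives $L'/L\ll\log^2(|t|+2)$ in the zero-free strip, Perron plus a contour shift yield $\psi(x,\chi)=O(x^{1-\epsilon_1})$, and then two Abel summations produce the convergence of $\sum_p\chi(p)p^{-(1-\epsilon)}$. The paper avoids the entire Perron/contour-shift step. Instead it applies Borel--Carath\'eodory directly to $\log L$ (via Titchmarsh, Theorem~14.2) to obtain $\log L(\sigma+it,\chi)\ll\log(|t|+2)$ in the zero-free half-plane; since $B(s)$ is bounded there, the analytic continuation of $\sum_p\chi(p)p^{-s}$, namely $\log L(s,\chi)-B(s)+c$, is $O(t^\delta)$ for every $\delta>0$, and a general Dirichlet-series convergence criterion (Tenenbaum, Theorem~II.1.4) then gives convergence throughout $\HH_{1-\epsilon}$ directly.

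Both arguments are valid and share the Borel--Carath\'eodory input and the final analytic-continuation/exponentiation step for the Euler product. Your approach buys a genuine quantitative bound on $\sum_{p\le x}\chi(p)$, at the cost of the technically involved contour shift you flag as the principal obstacle. The paper's approach is softer and shorter: it never estimates any partial sum over primes, trading the explicit-formula machinery for a black-box convergence lemma for Dirichlet series whose analytic continuation has subpolynomial growth.
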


It is worth mentioning that a converse result holds for Theorem 1.1:
\begin{theorem}\label{teorema 2} Let $\chi$ be a real and non-principal Dirichlet character. If for some $\epsilon>0$ we have that $L(s,\chi)\neq 0$ for all $Re(s)>1-\epsilon$, then there exists $1-\epsilon<\sigma<1$
such that $\sum_{p\leq x }\chi(p)p^{-\sigma}$ change sign only for a finite number of integers $x\geq 1$.
\end{theorem}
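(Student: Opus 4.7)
The plan is to convert the hypothesized zero-free region into a power-saving bound on $\pi(x,\chi) := \sum_{p\leq x}\chi(p)$, deduce convergence of the Dirichlet series $\sum_p \chi(p)/p^\sigma$ for $\sigma$ in some subinterval of $(1-\epsilon,1)$, and then conclude by observing that this limit is nonzero for all but countably many such $\sigma$.

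First I would apply the standard truncated explicit formula for $\psi(x,\chi) = \sum_{n\leq x}\chi(n)\Lambda(n)$ summed over the nontrivial zeros $\rho$ of $L(s,\chi)$ up to a height $T$. Since the assumed region forces $\Re(\rho) \leq 1-\epsilon$ for every such zero, and since $\sum_{|\Im(\rho)| \leq T}|\rho|^{-1} \ll \log^2(Tq)$ by classical zero-density estimates, one obtains $\psi(x,\chi) \ll x^{1-\epsilon}\log^2(Tq) + x\log^2(xq)/T$. Choosing $T$ as a small positive power of $x$ yields, for any fixed $\epsilon' \in (0,\epsilon)$, the bound $\psi(x,\chi) \ll x^{1-\epsilon'}$. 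The same bound then propagates to $\pi(x,\chi)$ by replacing prime powers with primes (an error of size $O(\sqrt{x})$) and performing an Abel summation against $1/\log u$.

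Next, Abel summation applied to $\sum_{p\leq x}\chi(p)p^{-\sigma}$ using $\pi(x,\chi) \ll x^{1-\epsilon'}$ shows that for every $\sigma > 1-\epsilon'$ the series $F(\sigma) := \sum_p \chi(p)/p^\sigma$ converges, and its partial sums tend to $F(\sigma)$. Independently, starting from the identity $\log L(s,\chi) = F(s) + \sum_{k\geq 2}\sum_p \chi(p)^k/(kp^{ks})$ on $\Re(s)>1$, where the $k\geq 2$ double sum is absolutely convergent and hence holomorphic on $\Re(s) > 1/2$, and using that $\log L(s,\chi)$ is holomorphic on the simply connected half-plane $\Re(s) > 1-\epsilon$ where $L(s,\chi)$ is entire and nonvanishing, one concludes that $F$ extends holomorphically to $\Re(s) > \max(1-\epsilon, 1/2)$. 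As $\sigma \to \infty$ one has $F(\sigma) \sim \chi(p_0)\, p_0^{-\sigma}$ for the least prime $p_0$ with $\chi(p_0)\neq 0$, so $F \not\equiv 0$ and its zeros on the real interval $(1-\epsilon,1)$ are isolated.

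Finally, choosing any $\sigma \in (1-\epsilon',1)$ avoiding the (discrete) zero set of $F$ gives $\sum_{p\leq x}\chi(p)p^{-\sigma} \to F(\sigma) \neq 0$, so the partial sums have constant sign for all sufficiently large $x$, and hence change sign only at finitely many integers. The main obstacle is the first step: executing the explicit-formula argument carefully enough to obtain the power-saving bound $\pi(x,\chi) \ll x^{1-\epsilon'}$ directly from the hypothesized zero-free region; once this is in place, the remaining steps are essentially soft manipulations involving Dirichlet series, analytic continuation, and the isolated-zeros property of real-analytic functions.
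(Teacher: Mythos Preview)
Your argument is correct, but the route differs from the paper's in the key step of establishing convergence of $\sum_{p}\chi(p)p^{-\sigma}$ on $\Re(s)>1-\epsilon$. You go through the explicit formula for $\psi(x,\chi)$, use the zero-free hypothesis to get $\psi(x,\chi)\ll x^{1-\epsilon'}$, pass to $\pi(x,\chi)$, and then Abel-sum. The paper instead avoids the explicit formula entirely: it notes that $\log L(s,\chi)$ extends holomorphically to the simply connected half-plane $\Re(s)>1-\epsilon$, applies Borel--Carath\'eodory (as in Titchmarsh, Theorem~14.2) to get $\log L(\sigma+it,\chi)\ll\log(|t|+2)$ there, and then invokes a general convergence criterion for Dirichlet series (Tenenbaum, Theorem~4, p.~134): if a Dirichlet series has an analytic continuation to $\HH_{\sigma_0}$ with growth $\ll |t|^{\delta}$ for every $\delta>0$, then the series itself converges on $\HH_{\sigma_0}$. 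Your approach is more hands-on and yields the quantitative bound $\pi(x,\chi)\ll x^{1-\epsilon'}$ as a byproduct; the paper's approach is softer and bypasses any explicit zero-sum, trading the explicit formula for a complex-analytic black box. For the nonvanishing of $F$, you use the asymptotic $F(\sigma)\sim\chi(p_0)p_0^{-\sigma}$ as $\sigma\to\infty$, while the paper argues that $F\equiv 0$ would force $\chi(p)=0$ for all $p$ by uniqueness of Dirichlet coefficients; both are equally valid. The final step---picking $\sigma$ off the discrete zero set and concluding eventual constant sign---is the same in both.
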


The proof of Theorem \ref{teorema 1} is an application of an integral version of Landau's oscillation Theorem: If $A:[0,\infty)\to\RR$ is a bounded Riemann-integrable function in any finite interval $[1,x]$, and such that $A(x)\geq 0$ for all $x\geq x_0>1$, then the function 
\begin{equation*}
F(s)=\int_{1}^\infty\frac{A(x)}{x^s}dx
\end{equation*} 
has a singularity in its abcissa of convergence.

In fact, the proof of Theorem \ref{teorema 1} is done by the following steps: For $Re(s)>1$ we can write
\begin{equation*}
\sum_{p\in\mathcal{P}}\frac{\chi(p)}{p^s}=(s-\sigma)\int_{1}^\infty \frac{\sum_{p\leq x}\frac{\chi(p)}{p^\sigma}}{x^{s+1-\sigma}}dx.
\end{equation*}
For $\chi$ non-principal, $L(1,\chi)\neq 0$ and since $L(s,\chi)$ is analytic in $Re(s)>0$, there exists an open ball $B$ of center $1$ and radius $\delta>0$ in which $L(s,\chi)\neq 0$. The union of the half plane $Re(s)>1$ with this open ball is a simply connected domain, and since $L(s,\chi)\neq0$ in this domain, there exists a branch of the Logarithm for $L(s,\chi)$. The existence of this branch implies that $\int_{1}^\infty \frac{\sum_{p\leq x}\frac{\chi(p)}{p^\sigma}}{x^{s+1-\sigma}}dx$ is analytic at a neighborhood of $s=1$, and hence, by the Landau's oscillation Theorem, this integral converges for $s=1-\epsilon$, for some $\epsilon>0$. 

If $f:\NN\to[-1,1]$ is a completely multiplicative function that it is \textit{small on average}, \textit{i.e.}, $\sum_{n\leq x}f(n)=o(x^{1-\delta})$ for some $\delta>0$, then the Dirichlet series $F(s):=\sum_{n=1}^\infty f(n)n^{-s}$ is analytic in $Re(s)>1-\delta$. In \cite{koukou}, Koukoulopoulos proved that if $f$ is small on average and if $F(1)\neq 0$, then $\sum_{p\leq x}f(p)\log p \ll x \exp(-c\sqrt{\log x})$, for some constant $c>0$. Let $\mathcal{P}$ be the set of primes. In \cite{aymonebiased} it has been proved that under biased assumptions, \textit{i.e.}, if at primes $(f(p))_{p\in\mathcal{P}}$ is a sequence of independent random variables such that $\EE f(p)<0$ for all primes $p$, then the assumptions that $f$ is small on average almost surely (\textit{a.s.}) and $F(1)\neq 0$ \textit{a.s.} imply that $\sum_{p\in\mathcal{P}}f(p)p^{-(1-\epsilon)}$ converges for some $\epsilon>0$ \textit{a.s.}, and hence that $F(s)\neq 0$ for all $Re(s)>1-\epsilon$, \textit{a.s}.

The same lines of the proof of Theorems \ref{teorema 1} and \ref{teorema 2} allow us to show that:
\begin{theorem} If $f:\NN\to[-1,1]$ is a completely multiplicative function that is \textit{small on average},  and if the Dirichlet series of $f$, say $F(s)$, is such that $F(1)\neq0$, then there exists $\epsilon>0$ such that $F(s)\neq 0$ for all $Re(s)>1-\epsilon$ if and only if there exists $0\leq \sigma<1$ such that the partial sums $\sum_{p\leq x}f(p)p^{-\sigma}$ change sign only for a finite number of integers $x$.
\end{theorem}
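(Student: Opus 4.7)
The plan is to follow the same arguments as for Theorems \ref{teorema 1} and \ref{teorema 2}, with $f$ replacing $\chi$. The only features of a real non-principal character that were essential there are (i) analyticity of the Dirichlet series on a half-plane strictly containing $\{Re(s) > 1\}$, which here holds for $F$ on $\{Re(s) > 1-\delta\}$ via partial summation from ``small on average'', and (ii) non-vanishing at $s=1$, which is assumed. Since $f$ is real and the Euler product converges absolutely to a positive value on $(1,\infty)$, continuity of $F$ at $s=1$ and $F(1) \neq 0$ force $F(1) > 0$, so the principal real branch of $\log F$ is defined on a real neighborhood of $s=1$. Let $P(s) := \sum_p f(p)/p^s$ and $G(s) := \sum_{k \geq 2} \sum_p f(p)^k/(k p^{ks})$. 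Because $|f(p)| \leq 1$, $G$ is analytic on $\{Re(s) > 1/2\}$ and $\log F(s) = P(s) + G(s)$ for $Re(s) > 1$. Partial summation yields the master identity
\begin{equation*}
P(s) = (s-\sigma) \int_1^\infty \frac{A_\sigma(x)}{x^{s+1-\sigma}}\,dx, \qquad A_\sigma(x) := \sum_{p \leq x} \frac{f(p)}{p^\sigma},
\end{equation*}
valid on $\{Re(s) > 1\}$, which drives both directions.

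For the direction ``constant sign $\Rightarrow$ zero-free region'': assume $A_\sigma$ has fixed sign for $x \geq x_0$. By (i) and (ii) there is an open disk $B$ around $s=1$ on which $F$ is nowhere zero, so on the simply connected domain $\{Re(s) > 1\} \cup B$ a single-valued branch of $\log F$ exists. Hence $P$, and therefore $I(s) := P(s)/(s-\sigma)$, extends analytically to a neighborhood of $s=1$. The integral form of Landau's oscillation theorem, applied to the representation of $I(s)$ above whose integrand has fixed sign for $x \geq x_0$, forces the abscissa of convergence of $I$ to lie strictly to the left of $1$; a standard Abel summation then converts this into convergence of $\sum_p f(p) p^{-(1-\epsilon)}$ for some $\epsilon > 0$, so the Euler product for $F$ converges and is nowhere zero on $\{Re(s) > 1-\epsilon\}$.

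For the direction ``zero-free region $\Rightarrow$ constant sign'': assume $F$ has no zeros on $\{Re(s) > 1-\epsilon\}$. Then $\log F$ is single-valued there and $P = \log F - G$ continues analytically to $\{Re(s) > \max(1-\epsilon, 1/2)\}$. A Perron-type contour shift into this region, using moderate-growth bounds on $F$ along vertical lines (obtained from ``small on average'' via partial summation and Phragm\'en--Lindel\"of interpolation), gives $\sum_{p \leq x} f(p) = O(x^{1-\epsilon'})$ for some $\epsilon' > 0$. Partial summation then shows that for every $\sigma \in (1-\epsilon', 1)$ the series $\sum_p f(p) p^{-\sigma}$ converges to $P(\sigma)$, so $A_\sigma(x) \to P(\sigma)$ as $x \to \infty$. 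Picking such $\sigma$ with $P(\sigma) \neq 0$ (possible unless $P \equiv 0$, in which case $f$ vanishes on all primes and the conclusion is trivial) shows $A_\sigma$ has fixed sign for all sufficiently large $x$.

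The main obstacle lies in the converse direction, specifically in securing the polynomial-saving bound $\sum_{p \leq x} f(p) = O(x^{1-\epsilon'})$ that licenses the Perron/contour-shift step. For Dirichlet $L$-functions this rests on classical convexity estimates inside the critical strip, but for general completely multiplicative $f$ in this setting the required vertical-line bounds on $F$ must be manufactured from the ``small on average'' hypothesis alone via partial summation and Phragm\'en--Lindel\"of. Verifying that this interpolation really produces enough decay -- together with the tacit assumption that the prime tail $G$ and its derivatives remain controllable on the extended domain -- is the principal technical subtlety absent from Theorems \ref{teorema 1}--\ref{teorema 2}.
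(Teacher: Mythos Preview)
Your forward direction tracks the paper's argument closely up to the point where Landau's theorem gives an abscissa $\sigma_c<1$ for the integral $I(s)$. After that you take an unnecessary detour: you claim that ``a standard Abel summation then converts this into convergence of $\sum_p f(p)p^{-(1-\epsilon)}$''. This is not standard, and in fact is not obviously true. From the Abel identity
\[
\sum_{p\le X}\frac{f(p)}{p^{s}}=(s-\sigma)\int_1^{X}\frac{A_\sigma(u)}{u^{s+1-\sigma}}\,du+\frac{A_\sigma(X)}{X^{s-\sigma}},
\]
convergence of the integral at $s=1-\epsilon$ does not by itself force $A_\sigma(X)/X^{1-\epsilon-\sigma}$ to have a limit, so convergence of the prime series does not follow. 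The paper sidesteps this entirely: once $I(s)$ is analytic on $\HH_{1-\epsilon}$, so is $P(s)=(s-\sigma)I(s)$, hence $\log F=P+G+c$ extends analytically there, and $F=\exp(\log F)$ is automatically nonzero. No convergence of the prime series or of the Euler product is needed for this direction; that is established separately (and by a different mechanism) in the proof of Corollary~1.1.

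For the converse direction your route differs from the paper's and is more laborious than necessary. The paper does not pass through a Perron estimate for $\sum_{p\le x}f(p)$. Instead it argues directly on $\log F$: since $F$ has abscissa of convergence at most $1-\delta$ (from ``small on average'' via partial summation), Result~1 of Lemma~\ref{lemma resultados auxiliares para series de Dirichlet} already gives $F(\sigma+it)\ll |t|^{A}$ with no Phragm\'en--Lindel\"of needed; then the zero-free hypothesis plus Borel--Carath\'eodory yield $\log F(\sigma+it)\ll \log(|t|+2)$ on $\HH_{1-\epsilon}$, and Result~2 of the same lemma converts this sub-polynomial bound directly into convergence of the Dirichlet series $\sum_p f(p)p^{-s}$ on $\HH_{1-\epsilon}$. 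From there your endgame (pick $\sigma\in(1-\epsilon,1)$ with $P(\sigma)\neq 0$, hence $A_\sigma(x)\to P(\sigma)\neq 0$) matches the paper's proof of Theorem~\ref{teorema 2}. So the ``main obstacle'' you flag (manufacturing polynomial bounds on $F$) is not actually an obstacle---it comes for free from the abscissa of convergence---while the genuine step, bounding $\log F$, you would also need for your Perron contour and is handled by Borel--Carath\'eodory.
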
 
\noindent \textbf{Acknowledgements.} I would like to thank the anonymous referee for a careful reading the paper and for useful comments.

\section{Proof of the main results}
\subsection*{Notation} Here $\chi$ is a Dirichlet character and $L(s,\chi)=\sum_{n=1}^\infty\frac{\chi(n)}{n^s}$. We use both $f(x)\ll g(x)$ and $f(x)=O(g(x))$ whenever there exists a constant $C>0$ such that for all large $x>0$ we have that $|f(x)|\leq C|g(x)|$. Further, $\ll_\delta$ means that the implicit constant may depend on $\delta$. We let $\mathcal{P}$ for the set of primes and $p$ for a generic element of $\mathcal{P}$. For a real number $a$, we denote the half plane $\{s\in\CC:Re(s)>a\}$ by $\HH_a$.
\begin{lemma}\label{lemma log} Let $\chi$ be a real and non-principal Dirichlet character. Then there exists
an analytic function $B:\HH_{1/2}\to\CC$ such that for $a>1/2$, $B(s)\ll_{a}1$ in the half plane $\HH_a$, and for $s\in\HH_1$:
\begin{equation*}
\log L(s,\chi)=\sum_{p\in\mathcal{P}}\frac{\chi(p)}{p^s}+B(s).
\end{equation*}
\end{lemma}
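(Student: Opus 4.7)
The natural starting point is the Euler product for $L(s,\chi)$, valid on $\HH_1$. Taking the principal branch of the logarithm (the one satisfying $\log L(s,\chi)\to 0$ as $\Re(s)\to\infty$) and expanding $-\log(1-z)=\sum_{k\geq 1}z^k/k$, I would write
\begin{equation*}
\log L(s,\chi)=\sum_{p\in\mathcal{P}}\sum_{k=1}^\infty \frac{\chi(p)^k}{k\, p^{ks}}=\sum_{p\in\mathcal{P}}\frac{\chi(p)}{p^s}+B(s),
\end{equation*}
where I define
\begin{equation*}
B(s):=\sum_{p\in\mathcal{P}}\sum_{k=2}^\infty \frac{\chi(p)^k}{k\, p^{ks}}.
\end{equation*}
So the content of the lemma is really that this tail series $B(s)$ extends analytically and boundedly to $\HH_{1/2}$.

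For the analytic continuation and boundedness, I would fix $a>1/2$ and estimate on $\HH_a$ using $|\chi(p)|\le 1$:
\begin{equation*}
\sum_{p}\sum_{k=2}^\infty \frac{|\chi(p)^k|}{k\, p^{k\Re(s)}}\le \sum_p \sum_{k=2}^\infty \frac{1}{p^{ka}}=\sum_p \frac{1}{p^{2a}(1-p^{-a})}\ll_a \sum_p \frac{1}{p^{2a}},
\end{equation*}
and the last sum converges since $2a>1$. Thus the double series defining $B(s)$ converges absolutely and uniformly on $\HH_a$; by the Weierstrass $M$-test, $B$ is analytic on every such $\HH_a$, hence analytic on $\HH_{1/2}$, and the displayed bound gives $B(s)\ll_a 1$ on $\HH_a$.

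Finally, the identity $\log L(s,\chi)=\sum_p \chi(p)/p^s+B(s)$ on $\HH_1$ follows by rearranging the double series $\sum_p\sum_{k\geq 1}\chi(p)^k/(k p^{ks})$: on $\HH_1$ this double series converges absolutely (same estimate with $a>1$), so separating the $k=1$ contribution is legitimate. I do not foresee a real obstacle here; the only point that needs care is choosing the branch of $\log L(s,\chi)$ on $\HH_1$ so that the Euler-product logarithm matches it, which is forced by the boundary condition $\log L(s,\chi)\to 0$ as $\Re(s)\to\infty$ together with continuity.
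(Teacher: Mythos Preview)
Your proposal is correct and follows essentially the same route as the paper: Euler product, Taylor expansion of $-\log(1-z)$, splitting off the $k=1$ term, and bounding the tail $B(s)=\sum_{p}\sum_{k\ge 2}\chi(p)^k/(k\,p^{ks})$ by $\sum_p p^{-2a}$ on $\HH_a$ to get analyticity on $\HH_{1/2}$ and the uniform bound. Your write-up is slightly more explicit (Weierstrass $M$-test, justification of rearrangement, remark on the branch of $\log$), but there is no substantive difference in approach.
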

\begin{proof}
This follows from the Euler product formula valid for $s\in\HH_1$:
\begin{equation*}
L(s,\chi)=\prod_{p\in\mathcal{P}}\bigg{(}1-\frac{\chi(p)}{p^s}\bigg{)}^{-1}.
\end{equation*}
Thus 
\begin{align*}
\log L(s,\chi)=\sum_{p\in\mathcal{P}}\log\bigg{(}1-\frac{\chi(p)}{p^s}\bigg{)}^{-1}=\sum_{p\in\mathcal{P}}\sum_{m=1}^\infty\frac{\chi(p)^m}{mp^{ms}}
\end{align*}
where in the last equality above we used the Taylor expansion for each term $\log\big{(}1-\frac{\chi(p)}{p^s}\big{)}^{-1}$. Now, we split this double infinite sum into two infinite sums:
\begin{equation*}
\log L(s,\chi)=\sum_{p\in\mathcal{P}}\frac{\chi(p)}{p^s}+\sum_{p\in\mathcal{P}}\sum_{m=2}^\infty\frac{\chi(p)^m}{mp^{ms}}. 
\end{equation*}
Let $B(s):=\sum_{p\in\mathcal{P}}\sum_{m=2}^\infty\frac{\chi(p)^m}{mp^{ms}}$. Then the inner sum $\sum_m$ is $\ll \frac{1}{p^{2Re(s)}}$. Thus $B(s)$ converges absolutely for each $s\in\HH_{1/2}$, and hence, it defines an analytic function in this half plane. Moreover,  for each fixed $a>1/2$, $B(s)\ll \sum_{p\in\mathcal{P}}\frac{1}{p^{2a}}$.
\end{proof}
\begin{lemma}[Lemma 15.1 of \cite{montgomerylivro}, Landau's oscillation Theorem]\label{lemma Landau} Let $A:[0,\infty)\to\RR$ be a bounded Riemann-integrable function in any finite interval $[1,x]$, and assume that for some large $x_0>0$ we have that $A(x)\geq 0$ for all $x\geq x_0>0$. Let $\sigma_c$ be the infimum of those $\sigma$ for which $\int_{1}^\infty \frac{|A(x)|}{x^\sigma}dx<\infty$. Then the function 
\begin{equation*}
F(s)=\int_{1}^\infty\frac{A(x)}{x^s}dx
\end{equation*} 
is analytic in $\HH_{\sigma_c}$ and has a singularity at $\sigma_c$.
\end{lemma}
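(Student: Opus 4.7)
My plan is to treat the two assertions separately. The analyticity of $F$ on $\HH_{\sigma_c}$ is routine: for any $s_0 \in \HH_{\sigma_c}$, pick $\sigma'$ with $\sigma_c<\sigma'<\text{Re}(s_0)$; then the integral and its formal derivatives in $s$ converge absolutely and uniformly on compact subsets of $\HH_{\sigma'}$, so differentiation under the integral sign (or Morera's theorem) gives holomorphy. The substantive content is the singularity at $\sigma_c$, which I would establish through the classical Landau argument by contradiction.

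I would suppose that $F$ admits an analytic extension to some open disk $U$ about $\sigma_c$, and then split the integral at $x_0$: writing $F_0(s) = \int_1^{x_0} A(x) x^{-s}\,dx$ and $F_1(s) = \int_{x_0}^\infty A(x) x^{-s}\,dx$, the first piece $F_0$ is entire (bounded integrand on a finite interval), so $F_1 = F - F_0$ would also extend analytically to $\HH_{\sigma_c}\cup U$. The decisive gain of this splitting is that the integrand defining $F_1$ is now non-negative.

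Next I would expand $F_1$ in a Taylor series about a real point $\sigma_1>\sigma_c$ chosen close enough to $\sigma_c$ that the disk of convergence reaches past the line $\text{Re}(s)=\sigma_c$; this is legitimate because the radius of convergence is at least the distance from $\sigma_1$ to the boundary of $\HH_{\sigma_c}\cup U$, which strictly exceeds $\sigma_1 - \sigma_c$ by the openness of $U$. Using
\begin{equation*}
F_1^{(k)}(\sigma_1) = (-1)^k \int_{x_0}^\infty A(x) \frac{(\log x)^k}{x^{\sigma_1}}\,dx,
\end{equation*}
the non-negativity of $A$ on $(x_0,\infty)$ makes every term in the expansion $F_1(\sigma_2)=\sum_{k\geq 0} F_1^{(k)}(\sigma_1)(\sigma_2-\sigma_1)^k/k!$ non-negative for any real $\sigma_2<\sigma_c$ inside the disk of convergence. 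Tonelli's theorem then justifies swapping the sum with the integral, and recognizing the inner sum as the exponential series for $x^{\sigma_1-\sigma_2}$ yields
\begin{equation*}
F_1(\sigma_2) = \int_{x_0}^\infty \frac{A(x)}{x^{\sigma_2}}\,dx < \infty,
\end{equation*}
contradicting the definition of $\sigma_c$ as the convergence abscissa.

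The step I expect to require the most care is the Tonelli exchange, which is precisely why the splitting at $x_0$ is essential: the possibly signed contribution of $A$ on $[1,x_0]$ would otherwise obstruct the non-negativity framework needed to swap sum and integral without worrying about absolute integrability of a signed double sum. Everything else is standard complex-analytic bookkeeping.
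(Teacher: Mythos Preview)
The paper does not supply its own proof of this lemma: it is simply quoted as Lemma~15.1 of Montgomery--Vaughan, so there is no in-paper argument to compare against. Your outline is the classical Landau argument and is correct; in particular the splitting at $x_0$ to reduce to a non-negative integrand, the Taylor expansion about a real $\sigma_1>\sigma_c$ with radius exceeding $\sigma_1-\sigma_c$, and the Tonelli interchange are exactly the standard steps one finds in the cited reference. One small clarification worth adding when you write it out in full: after concluding $\int_{x_0}^\infty A(x)x^{-\sigma_2}\,dx<\infty$, you should note explicitly that this equals $\int_{x_0}^\infty |A(x)|x^{-\sigma_2}\,dx$ by non-negativity, and combine with the trivially finite $\int_1^{x_0}|A(x)|x^{-\sigma_2}\,dx$ to contradict the definition of $\sigma_c$ as stated (which is in terms of $|A|$).
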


\begin{lemma}[Corollary 6.17 of \cite{conway}]\label{lemma branch} Let $G$ be a simply connected domain
and $f:G\to\CC$ an analytic function such that $f(s)\neq 0$ for all $s\in G$. Then there exists an analytic function $g:G\to\CC$ such that $f(z)=\exp(g(z))$. If $w:G\to\CC$ is another analytic function such that $f(s)=\exp(w(s))$ for all $s\in G$, then there exists $c\in\CC$ such that $g(s)-w(s)=c$, for all $s\in G$. 
\end{lemma}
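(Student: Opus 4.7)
The plan is to construct $g$ as an antiderivative of the logarithmic derivative $f'/f$, adjusted by an additive constant so that $\exp(g)=f$. First I would observe that $\varphi:=f'/f$ is analytic on $G$, since $f$ is analytic and nowhere zero there. The crucial use of simple connectedness is then to produce a primitive of $\varphi$: by Cauchy's theorem, $\oint_\gamma \varphi(z)\,dz = 0$ for every closed contour $\gamma\subset G$, so fixing $s_0\in G$ and setting $h(s):=\int_{s_0}^s \varphi(w)\,dw$ (the integral taken along any path in $G$ from $s_0$ to $s$) gives a well-defined analytic function on $G$ with $h'=\varphi$.

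With $h$ in hand, the next step is to compare $f$ with $\exp(h)$. Differentiating, one finds
\[
(f\exp(-h))' = \exp(-h)\bigl(f' - fh'\bigr) = \exp(-h)\bigl(f' - f\cdot f'/f\bigr) = 0,
\]
so $f\exp(-h)$ is constant on the connected set $G$, say equal to $k\in\CC\setminus\{0\}$. Choosing any $c_0\in\CC$ with $\exp(c_0)=k$ and setting $g:=h+c_0$ yields $\exp(g)=f$ throughout $G$, which gives the existence part of the lemma.

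For the uniqueness clause, suppose $w$ is another analytic function with $\exp(w)=f$. Then $\exp(g-w)\equiv 1$, which forces $g-w$ to take values in the discrete set $2\pi i\,\mathbb{Z}$; continuity on the connected domain $G$ then implies that $g-w$ equals a single constant $c\in\CC$, as claimed. The only non-formal step in the whole argument is the construction of the primitive $h$ at the beginning, and this is precisely where the simple connectedness of $G$ is essential --- without it the function $\varphi=f'/f$ might fail to admit a single-valued antiderivative, as happens, for example, with $f(z)=z$ on $\CC\setminus\{0\}$.
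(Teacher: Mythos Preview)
Your argument is correct and is essentially the standard proof of this result---indeed, it is the proof given in Conway's \emph{Functions of One Complex Variable} for Corollary~6.17, which the paper simply cites without reproducing. The paper itself offers no proof of this lemma, so there is nothing to compare against beyond noting that your construction of a primitive of $f'/f$ via path integrals (using simple connectedness), followed by the observation that $f\exp(-h)$ has vanishing derivative, is exactly the route Conway takes.
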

\begin{proof}[Proof of Theorem \ref{teorema 1}]
Let $A(x)=\sum_{p\leq x}\frac{\chi(p)}{p^\sigma}$, $0\leq\sigma<1$. Assume that for some $x_0>0$, $A(x)$ is either $A(x)\geq 0$ for all $x\geq x_0$ or $A(x)\leq 0$ for all $x\geq x_0$. Clearly $A(x)$ is a bounded Riemann-integrable function in any finite interval $[1,x]$. Let $s\in \HH_1$. Then
\begin{align*}
\sum_{p\in\mathcal{P}}\frac{\chi(p)}{p^s}=\sum_{p\in\mathcal{P}}\frac{\chi(p)}{p^\sigma}\frac{1}{p^{s-\sigma}}=\int_1^\infty \frac{1}{u^{s-\sigma}}dA(u)=(s-\sigma)\int_{1}^\infty \frac{A(u)}{u^{s+1-\sigma}}du. 
\end{align*}
Since the partial sums $\sum_{n\leq x} \chi(n)\ll 1$, we have that $L(s,\chi)$ converges for all $s\in\HH_0$, and hence, it is analytic in this half plane. Further, by Lemma \ref{lemma log}, we have that $L(s,\chi)\neq 0$ for $s\in\HH_1$. Moreover, if $\chi$ is non-principal, $L(1,\chi)\neq 0$. Thus there exists an open ball $B$ with positive radius and centered at $s=1$ such that $L(s,\chi)\neq 0$ for all $s\in B$. It follows that $L(s,\chi)\neq 0$ for all $s\in \HH_1\cup B$. The set $\HH_1\cup B$ is simply connected. Thus, by Lemma \ref{lemma branch}, there exists an analytic function $\log^*L(\cdot,\chi):\HH_1\cup B\to\CC$
such that $L(s,\chi)=\exp(\log^*L(s,\chi))$ for all $s\in\HH_1\cup B$. By Lemma \ref{lemma log}, we have for $s\in\HH_1$  
\begin{equation*}
L(s,\chi)=\exp\bigg{(}\sum_{p\in\mathcal{P}}\frac{\chi(p)}{p^s}+B(s)\bigg{)}.
\end{equation*}
Since $\HH_1$ also is simply connected, by Lemma \ref{lemma branch} it follows that there exists a constant $c\in\CC$ such that for all $s\in\HH_1$
\begin{equation*}
\log^*L(s,\chi)=(s-\sigma)\int_{1}^\infty \frac{A(u)}{u^{s+1-\sigma}}du+B(s)+c.
\end{equation*} 
Thus:
\begin{equation*}
\int_{1}^\infty \frac{A(u)}{u^{s+1-\sigma}}du=\frac{\log^*L(s,\chi)-B(s)-c}{s-\sigma}.
\end{equation*}
It follows that $\int_{1}^\infty \frac{A(u)}{u^{s+1-\sigma}}du$ has an analytic continuation to $\HH_1\cup B'$, where $B'$ is an open ball of positive radius and centered at $1$. Since $\HH_1\cup B' $ is a simply connected domain, this analytic continuation is unique. Thus, the function defined by this integral for $Re(s)>1$ and by the analytic continuation for the other values of $s$ is analytic in $\HH_1\cup B' $, in particular, this function does not have a singularity $s=1$. Hence, by Landau's oscillation Theorem (Lemma \ref{lemma Landau}), $\sigma=1$ can not be the abscissa of convergence of the integral: we have that $\int_{1}^\infty \frac{A(u)}{u^{s+1-\sigma}}du$ converges for $s=1-\epsilon$, for some $\epsilon>0$. Since $A(x)$ changes sign only for a finite number of $x\geq 1$, this convergence is absolute, and hence $\int_{1}^\infty \frac{A(u)}{u^{s+1-\sigma}}du$ is an analytic function in $\HH_{1-\epsilon}$. It follows that $\log L(s,\chi)$ has an analytic continuation to $\HH_{1-\epsilon}$, and hence, $L(s,\chi)\neq 0$ for $s\in\HH_{1-\epsilon}$.     \end{proof}  

Now we will prove Corollary 1.1, but before that, we will require the following results for Dirichlet series which we refer to Theorem 15, pg. 119,  and to Theorem 4, pg. 134, of the book of Tenenbaum \cite{tenenbaumlivro}:

\begin{lemma}\label{lemma resultados auxiliares para series de Dirichlet} 
Result 1) Let $F(s)=\sum_{n=1}^\infty\frac{a_n}{n^s}$ be a Dirichlet series with finite abcsissa of convergence $\sigma_c$. Let $\sigma_0>\sigma_c$. Then uniformly for $\sigma_0\leq \sigma\leq \sigma_c+1$ we have that $F(\sigma+it)\ll |t|^{1-(\sigma-\sigma_c)+\delta}$. \\
Result 2) If $F(s)=\sum_{n=1}^\infty\frac{a_n}{n^s}$ has a finite abscissa of convergence and if $\sigma_0$ is some real number for which $F(s)$ has an analytic continuation to $\HH_{\sigma_0}$ satisfying, for each $\sigma>\sigma_0$, $F(\sigma+it)\ll t^\delta$, for all $\delta>0$, then $\sum_{n=1}^\infty\frac{a_n}{n^s}$ converges for all $s\in\HH_{\sigma_0}$.

\end{lemma}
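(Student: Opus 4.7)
The plan is to establish both results via standard techniques in the analytic theory of Dirichlet series: Abel summation combined with a dyadic split at height $|t|$ for Result 1, and a smoothed Perron formula with contour shifting for Result 2.

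For Result 1, I would first invoke the characterization of the abscissa of convergence: since $\sigma_c$ is finite, the partial sums $A(x):=\sum_{n\leq x}a_n$ satisfy $A(x)\ll_{\delta}x^{\sigma_c+\delta}$ for every $\delta>0$. Abel summation then expresses $F$ as
\begin{equation*}
F(s)=s\int_1^\infty \frac{A(x)}{x^{s+1}}\,dx \qquad (s\in\HH_{\sigma_c}).
\end{equation*}
Writing $s=\sigma+it$ and splitting the integral at $x=|t|$, the piece over $[1,|t|]$ is bounded by plugging in $|A(x)|\ll x^{\sigma_c+\delta}$ and integrating, contributing $\ll |t|^{\sigma_c+\delta-\sigma}$. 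For the tail $[|t|,\infty)$, one performs a second integration by parts against the convergent Dirichlet series at an auxiliary exponent $\sigma_1\in(\sigma_c,\sigma_0)$, whose tail sums tend to zero; this produces a saving factor of $1/|t|$ that tames the polynomial growth of $A$. Multiplying the final estimate by $|s|\asymp |t|$ then yields the required uniform bound $F(\sigma+it)\ll_{\delta}|t|^{1-(\sigma-\sigma_c)+\delta}$ on the strip $\sigma_0\leq\sigma\leq\sigma_c+1$.

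For Result 2, I would work with a Cesàro-smoothed Perron formula of the shape
\begin{equation*}
\frac{1}{k!}\sum_{n\leq x}a_n(x-n)^k=\frac{1}{2\pi i}\int_{c-i\infty}^{c+i\infty}F(s)\,\frac{x^{s+k}}{s(s+1)\cdots(s+k)}\,ds,
\end{equation*}
valid for $c>\sigma_c$ and for any integer $k\geq 1$ large enough that the kernel $\prod_{j=0}^{k}(s+j)^{-1}$ renders the integral absolutely convergent under the bound $F(\sigma+it)\ll |t|^\delta$. Fixing any $c'>\sigma_0$ and using the hypothesis that $F$ is analytic on $\HH_{\sigma_0}$ with polynomial growth, Cauchy's theorem lets me shift the contour to the line $\mathrm{Re}(s)=c'$: the horizontal connecting segments vanish as the truncation parameter tends to infinity, and the vertical integral on $\mathrm{Re}(s)=c'$ is absolutely bounded by $\ll x^{c'+k}$. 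Passing from the smoothed sum back to $A(x)=\sum_{n\leq x}a_n$ via $k$ successive finite differences then gives $A(x)\ll_{\delta,c'} x^{c'+\delta}$ for every $c'>\sigma_0$ and every $\delta>0$; the abscissa-of-convergence characterization recalled in the first step forces $\sigma_c\leq\sigma_0$, so $\sum a_n/n^s$ converges throughout $\HH_{\sigma_0}$.

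The main obstacle is precisely this contour shift: under the hypothesis $F(\sigma+it)\ll |t|^\delta$ alone, the unsmoothed integral $\int F(s)x^s/s\,ds$ is only conditionally convergent, so one cannot directly move the line of integration without first regularizing. The device of inserting the additional factors $\prod_{j=0}^{k}(s+j)^{-1}$ restores absolute convergence, at the cost of working with iterated Cesàro means and then recovering $A(x)$ by a finite-difference step; the small $\delta$-loss in the final exponent is harmless because $c'>\sigma_0$ is arbitrary.
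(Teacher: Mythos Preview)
The paper does not give its own proof of this lemma: it is simply quoted, with the two parts attributed respectively to Theorem~15, p.~119, and Theorem~4, p.~134 of Tenenbaum's book. Your sketch is essentially the classical argument found there---Abel summation with the partial-sum bound $A(x)\ll_\delta x^{\sigma_c+\delta}$ and a split of the resulting integral at height $|t|$ for Result~1, and a Ces\`aro-weighted Perron formula followed by a contour shift and finite differencing for Result~2---so there is no meaningful divergence to discuss. One small caveat: the bound $A(x)\ll_\delta x^{\sigma_c+\delta}$ in the form you state it presupposes $\sigma_c\geq 0$; when $\sigma_c<0$ one must work with the tails $\sum_{n>x}a_n$ instead, but this does not affect the structure of the argument.
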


\begin{proof}[Proof of Corollary 1.1] In Lemma \ref{lemma log} we proved that for $Re(s)>1$ 
\begin{equation}\label{equacao logL}
\log L(s,\chi)=\sum_{p\in\mathcal{P}}\frac{\chi(p)}{p^s}+\sum_{p\in\mathcal{P}}\sum_{m=2}^\infty\frac{\chi(p)^m}{mp^{ms}},
\end{equation}
where the infinite double sum above converges absolutely for all $s\in\HH_{1/2}$. Firstly, we recall the principle of analytic continuation which says that when two analytic functions defined in the same open connected set $G$ coincide in a compact and infinite set $K\subset G$, then these two functions are the same. On the one hand, if $L(s,\chi)\neq 0$ for all $s\in \HH_{1-\epsilon}$, then the function $\log L(s,\chi)$ is well defined and analytic in $\HH_{1-\epsilon}$, and we can make it to coincide with the right side of \eqref{equacao logL} for all $s\in\HH_1$. Indeed, by Lemma \ref{lemma branch}, as $\HH_{1-\epsilon}$ is simply connected, there is an analytic function $\log^*L(s,\chi)$ defined for all $s$ in this half plane such that $L(s,\chi)=\exp(\log^*L(s,\chi))$. Since $\HH_1$ also is simply connected, by Lemma \ref{lemma branch} again we have that $\log^*L(s,\chi)$ minus the right side of \eqref{equacao logL} is a constant $c$, for all $s\in\HH_1$. Thus, by making a little abuse of notation and defining $\log L(s,\chi)=\log^*L(s,\chi)-c$ for all $s\in\HH_{1-\epsilon}$, we have that $\log L(s,\chi)$ is analytic in $\HH_{1-\epsilon}$, coincides with the right side of \eqref{equacao logL} in $\HH_1$, and in $\HH_1$, $L(s,\chi)=\exp(\log L(s,\chi))$, and hence, by the principle of analytic continuation stated above this last relation holds for all $s\in\HH_{1-\epsilon}$.  Thus, we only need to show that the series $\sum_{p\in\mathcal{P}}\frac{\chi(p)}{p^s}$ converges for all $s\in\HH_{1-\epsilon}$ because, in this case, the function in the right side of \eqref{equacao logL} will be analytic in $\HH_{1-\epsilon}$, and since coincides with $\log L(s,\chi)$ for all $s\in\HH_1$, the identity \eqref{equacao logL} will hold for all $s\in\HH_{1-\epsilon}$. Therefore, by undoing the Taylor approximation, we will have for $s\in\HH_{1-\epsilon}$:
\begin{align*}
L(s,\chi)&=\exp(\log L(s,\chi))=\exp\left(\sum_{p\in\mathcal{P}}\frac{\chi(p)}{p^s}+\sum_{p\in\mathcal{P}}\sum_{m=2}^\infty\frac{\chi(p)^m}{mp^{ms}}\right)\\
&=\exp\left(\sum_{p\in\mathcal{P}}\sum_{m=1}^\infty\frac{\chi(p)^m}{mp^{ms}}\right)=\exp\left(\sum_{p\in\mathcal{P}} \log\left(1-\frac{\chi(p)}{p^s}\right)^{-1}  \right)\\
&=\prod_{p\in\mathcal{P}}\left(1-\frac{\chi(p)}{p^s}\right)^{-1}.
\end{align*}

By Lemma \ref{lemma resultados auxiliares para series de Dirichlet} result 1), uniformly for $\sigma_0\leq \sigma\leq \sigma_c+1$, we have that $F(\sigma+it)\ll |t|^{1-(\sigma-\sigma_c)+\delta}$. Since $L(s,\chi)$ is convergent for $s\in\HH_0$, we have for some constant $A>0$, $L(\sigma+it,\chi)\ll|t|^A$.

On the hypothesis of Theorem \ref{teorema 1}, we have that $L(s,\chi)\neq 0$ for $s\in\HH_{1-\epsilon}$.
Hence, for $\sigma>1-\epsilon$, $\log|L(\sigma+it,\chi)|\ll A\log (|t|+2)$. By applying the Borel-Caratheodory theorem, we can conclude, in the same line of reasoning of Theorem 14.2 of \cite{tit} that
$\log L(\sigma+it,\chi)\ll \log(|t|+2)$. Thus, by Lemma \ref{lemma log} and Lemma \ref{lemma branch} we have that $\sum_{p\in\mathcal{P}}\frac{\chi(p)}{p^s}$ has an analytic continuation to $\HH_{1-\epsilon}$ given by $F(s)=\log L(s,\chi)-B(s)+c$, for some constant $c$. This analytic continuation is, for $\sigma>1-\epsilon$, $F(\sigma+it)\ll \log(|t|+2)\ll t^\delta$, for all $\delta>0$. 

Finally, by Lemma \ref{lemma resultados auxiliares para series de Dirichlet} result 2), we conclude that  $\sum_{p\in\mathcal{P}}\frac{\chi(p)}{p^s}$ converges for all $s\in\HH_{1-\epsilon}$.  \end{proof}

\begin{proof}[Proof of Theorem \ref{teorema 2}] In the proof of Corollary 1.1, we showed that the hypothesis $L(s,\chi)\neq 0$ for all $s\in\HH_{1-\epsilon}$ implies that the series $\sum_{p\in\mathcal{P}}\frac{\chi(p)}{p^s}$ converges for all $s\in\HH_{1-\epsilon}$. We claim that there exists $\sigma\in(1-\epsilon,1)$ for which $\sum_{p\in\mathcal{P}}\frac{\chi(p)}{p^\sigma}\neq 0$. By contradiction, if no such $\sigma$ exists, then $\sum_{p\in\mathcal{P}}\frac{\chi(p)}{p^s}=0$ for all $s\in (1-\epsilon, 1)$, and since this Dirichlet series is an analytic function, it follows that this analytic function is equal to zero everywhere. Hence, by Theorem 1.6 of \cite{montgomerylivro}, we have that $\chi(p)=0$ for all $p\in\mathcal{P}$, which is a contradiction. Hence, there exists $\sigma\in(1-\epsilon,1)$ such that the partial sums $\sum_{p\leq x}\frac{\chi(p)}{p^\sigma}$ converges, as $x\to\infty$, to a non-zero value. Hence, this partial sums can change sign only for a finite number of integers $x$. \end{proof}

\section{Concluding remarks}
In this section we discuss the condition in which $\sum_{p\leq x}\frac{\chi(p)}{p^\sigma}$ change sign only for a finite number of integers $x$, for some $0\leq \sigma<1$.

We begin by justifying \eqref{equacao auxiliar 0}. If the Riemann Hypothesis for $L(s,\chi)$ is true for a real and non-principal Dirichlet character $\chi$, by the proof of Corollary 1.1 we see that for each $\sigma>1/2$
\begin{equation*}
\log L(\sigma,\chi)=\sum_{p\in\mathcal{P}}\sum_{m=1}^\infty\frac{\chi(p)^m}{mp^{m\sigma}}=\sum_{p\in\mathcal{P}}\frac{\chi(p)}{p^\sigma}+\frac{1}{2}\sum_{p\in\mathcal{P}}\frac{\chi(p)^2}{p^{2\sigma}}+\sum_{p\in\mathcal{P}}\sum_{m=3}^\infty\frac{\chi(p)^m}{mp^{m\sigma}}.
\end{equation*}
The last infinite double sum above converges absolutely for $\sigma>1/3$, and hence, it is $O(1)$ in the interval $\sigma\in[1/2,1]$. Further, since $\chi$ is real, we have that $\chi^2(p)=1$, except for a finite quantity of primes $p$. Now, for $2\sigma>1$, the Riemann $\zeta$ function satisfies $\zeta(2\sigma)=\frac{1}{2\sigma-1}+O(1)$ and $\log\zeta(2\sigma)=\sum_{p\in\mathcal{P}}\frac{1}{p^{2\sigma}}+O(1)$. This allow us to write
\begin{equation}\label{equacao auxiliar 1}
\log L(\sigma,\chi)=\sum_{p\in\mathcal{P}}\frac{\chi(p)}{p^\sigma}+\frac{1}{2}\log\left(\frac{1}{2\sigma-1}\right)+O(1).
\end{equation}

Since $L(s,\chi)$ is analytic in $\HH_0$, we have that $L(\sigma,\chi)\leq C$, for all $\sigma\in[1/2,1]$, where $C>0$ is a finite constant. 
Plugging this into \eqref{equacao auxiliar 1}, we obtain that for some constant $D>0$, for all $\sigma\in(1/2,1]$
\begin{equation*}
\sum_{p\in\mathcal{P}}\frac{\chi(p)}{p^\sigma}\leq -\frac{1}{2}\log\left(\frac{1}{2\sigma-1}\right)+D.
\end{equation*}

Since the series in the left side above converges, we have that for sufficiently large $x_0=x_0(\sigma)>0$ depending only on $\sigma$, for all $x\geq x_0$
\begin{equation}\label{equacao auxiliar 2}
\sum_{p\leq x}\frac{\chi(p)}{p^\sigma}\leq -\frac{1}{2}\log\left(\frac{1}{2\sigma-1}\right)+D+1,
\end{equation}
in particular $\sum_{p\leq x}\frac{\chi(p)}{p^\sigma}$ change sign only for a finite number of integers $x\geq 1$. Since the right side of \eqref{equacao auxiliar 2} blows to $-\infty$ as $\sigma\to1/2^+$, we conjecture:

\noindent \textbf{Conjecture}. For the prime number race mod $4$, $\chi_4$ being the non-principal Dirichlet character mod $4$, $\lim_{x\to\infty}\sum_{p\leq x}\frac{\chi_4(p)}{\sqrt{p}}=-\infty$.

Summarizing, the Riemann Hypothesis for $L(s,\chi)$ implies \eqref{equacao auxiliar 2} for all $x\geq x_0$ which is in agreement with the intuition behind the Tch\'ebyshev bias mod $4$. On the other hand, \eqref{equacao auxiliar 2} implies that for some $\sigma\in[0,1)$ the partial sums $\sum_{p\leq x}\frac{\chi(p)}{p^\sigma}$ change sign only for a finite number of integers $x$, which in turn, by Theorem \ref{teorema 1}, does not implies the Riemann Hypothesis for $L(s,\chi)$, but it gives that a certain half plane is a zero free region for it. Thus, our hypothesis is almost of same strenght of the Riemann Hypothesis for $L(s,\chi)$.

{\small{\sc \noindent Marco Aymone \\
Departamento de Matem\'atica, Universidade Federal de Minas Gerais, Av. Ant\^onio Carlos, 6627, CEP 31270-901, Belo Horizonte, MG, Brazil.} \\
\textit{Email address:} marco@mat.ufmg.br}
\vspace{0.5cm}

\end{document}